\title{Phase Transitions in Partially Structured Random Graphs}
\author{Oskar Sandberg\thanks{The Department of Mathematical Sciences, Chalmers
    University of Technology and Göteborg
    University. ossa@math.chalmers.se.}}
\date{\today}
\newtheorem{theorem}{Theorem}[section]
\newtheorem{lemma}[theorem]{Lemma}
\newtheorem{definition}[theorem]{Definition}
\newtheorem{proposition}[theorem]{Proposition}
\newtheorem{conjecture}[theorem]{Conjecture}
\newcommand{\PR}{\mathbf{P}}
\newcommand{\E}{\mathbf{E}}
\newcommand{\Var}{\mathbf{Var}}
\newcommand{\R}{\mathbb{R}}
\newcommand{\Z}{\mathbb{Z}}
\newcommand{\N}{\mathbb{N}}
\newcommand{\convP}{\overset{p}{\rightarrow}}
\newcommand{\links}{\leftrightarrow}
\begin{document}

\maketitle

\begin{abstract}
We study a one parameter family of random graph models that spans a
continuum between traditional random graphs of the Erd\H{o}s-R\'enyi
type, where there is no underlying structure, and percolation models,
where the possible edges are dictated exactly by a geometry. We find
that previously developed theories in the fields of random graphs and
percolation have, starting from different directions, covered almost
all the models described by our family. In particular, the existence
or not of a phase transition where a giant cluster arises has been
proved for all values of the parameter but one. We prove that the
single remaining case behaves like a random graph and has a single
linearly sized cluster when the expected vertex degree is greater than
one.
\end{abstract}

\section{Random Graphs and Finite Percolation}

%Random graphs of the $G(n,p)$ family and Bernoulli bond percolation
%are two of the most prevalent structures discussed in discrete
%probability theory. On a superficial level they are similar models,
%but in practice they invite for different formulations of the
%fundamental problems and methods of analysis.

The $\mathcal{G}(n,p)$ family of random graphs, originally due to
Erd\H{o}s and R\'enyi, is constructed by letting the vertex set $V =
[n] = \{0,1,\hdots,n-1\}$, and letting every possible edge $\{u,v\}$
belong to the edge set $E$ independently with probability $p$.
Bernoulli bond percolation models, on the other hand, are typically
constructed by starting with a finite degree lattice and retaining
only the edges therein in the same manner, and not any others
(retained edges are called open).

The study of random graphs is thus the study of models without an
underlying geometry, whereas percolation models depend heavily on the
geometry and structure of the lattice on which they are defined. A
reasonable question is to ask what happens if one relaxes the
influence of the structure in percolation models -- for example by
allowing open edges to form between vertices more then one step from
each other in the lattice. Such models, known as \emph{long-range
  percolation}, have been studied previously (see
\cite{newman:percolation} \cite{aizenman:discontinuity} and below for
more references).

Likewise, starting from the other direction, one may ask what happens
to $\mathcal{G}(n,p)$ like graphs when structure is introduced --
making some possible edges more likely to appear than others. Results
about this can be gleaned from recent generalized random graph models
\cite{bollobas:phase}, and show how much structure can be introduced
while keeping the behavior of the model more or less intact.

In fact, in terms of the distance dependence of the edges, known
models for long-range percolation and generalized random graphs come
very close to covering the whole spectrum.  We will discuss a family
of random graph models with a single real parameter $\alpha$ that
regulates the influence of an underlying structure. We will see that
the cases when $\alpha < 1$ fall in the category of previously
analyzed random graphs, while the cases where $\alpha > 1$ fall in the
category of long-range percolation models. We present some
connectivity results for the final, critical, case where $\alpha = 1$.

\subsection{Notation}

As is common, we will use $G$ to denote both specific graph
realizations and the random graphs, though we strive to make the
difference clear by context. Where $\mathcal{G}$ is a random graph
family, $G \sim \mathcal{G}$ means that $G$ is distributed according to
this family. $C_1 = C_1(G)$ will denote the biggest connected
component of $G$.

As usual, a series of events $A_1, A_2, \hdots$ occurring
\emph{asymptotically almost surely} (a.a.s.) means that
$\lim_{n\rightarrow \infty}\PR(A_n) = 1$.

\subsection{Organization}

The paper is organized as follows. In Section \ref{sec:alpha} we
introduce our ``$\alpha$-model'' of random graphs, and in Section
\ref{sec:regimes} we go through how the behavior of the model varies,
discussing the known results for most values of $\alpha$. Finally
Section \ref{sec:analysis} contains our analytic contribution.

\section{The $\alpha$-model}
\label{sec:alpha}

A major difference between the analysis of random graphs and
percolation models is whether it is done in a finite or infinite
setting.  Questions about percolation are typically asked about the
behavior of clusters on a infinite grid -- the most basic question
being whether an infinite cluster remains open. $\mathcal{G}(n,p)$ is,
on the other hand, almost always studied for finite values of $n$ -
this for the simple reason that if $n = \infty$ and $p > 0$ the graph
is a.s.  not locally finite. The typical approach is instead to scale
the value of $p$ with $n$ -- in particular, sparse random graphs are
ones where $p = c/n$ for some fixed $c$ (meaning that expected degree
is essentially constant for all $n$). The question is then, rather
than asking whether an infinite cluster exists, to look at the
relative size of the largest cluster (compared to $n$) as a function
of $c$.

We take the latter approach here, using a degree normalizer and
studying finite graphs, but note that it largely intersects with
normalizer free models in cases where the degree is already limited by
the structure.  We will also restrict ourselves to a one dimensional
geometry. One dimension is not an interesting environment for standard
Bernoulli percolation, but long-range percolation can be fruitful
here. Our geometry is based around the following metric:
\[
d(u, v) = \min(|u-v|, n - |u - v|).
\]
This is equivalent to placing the vertices in a ring and using the
geodesic distance (see Figure \ref{fig:graph}).

\begin{definition} \emph{(The $\alpha$-model)} For $\alpha \in [0,
  \infty]$ the family of random graphs $\mathcal{G}_\alpha(n, c)$ are
  graphs $G = (V, E)$, where $V = [n]$ and for $u, v
  \in V$
\[
p_{u,v} = \PR(\{u,v\} \in E) = {c \over h_{\alpha, n} d(u,v)^\alpha}
\]
where $h_{\alpha, n} = \sum_{u \in V : u \neq 0} 1/d(u,0)^\alpha$ ,
independently for all disjoint $\{u, v\}$.
\label{def:alphamodel}
\end{definition}

For $\alpha = 0$ this equivalent to $G(n, p)$ with $p = c/(n-1)$. When
$\alpha = \infty$
\[
p_{uv} =
\begin{cases}
  c/2 & \text{ if } d(u,v) = 1 \\
0 & \text{ otherwise.}
\end{cases}
\]
which is standard percolation, for which an infinite cluster cannot
exist if $c < 2$.

Random graphs with edge probabilities given by a power-law of the
distance are not new, and have appeared in more or less exactly this
form elsewhere. See the pioneering work of Aizenman, Newman, and
Schulman \cite{newman:percolation} \cite{aizenman:discontinuity}, the
ideas of Kleinberg \cite{kleinberg:smallworld}, as well as later work
by other authors \cite{benjamini:diameter},
\cite{coppersmith:diameter}.

\begin{figure}
  \begin{center}
    \includegraphics[width=5.0cm]{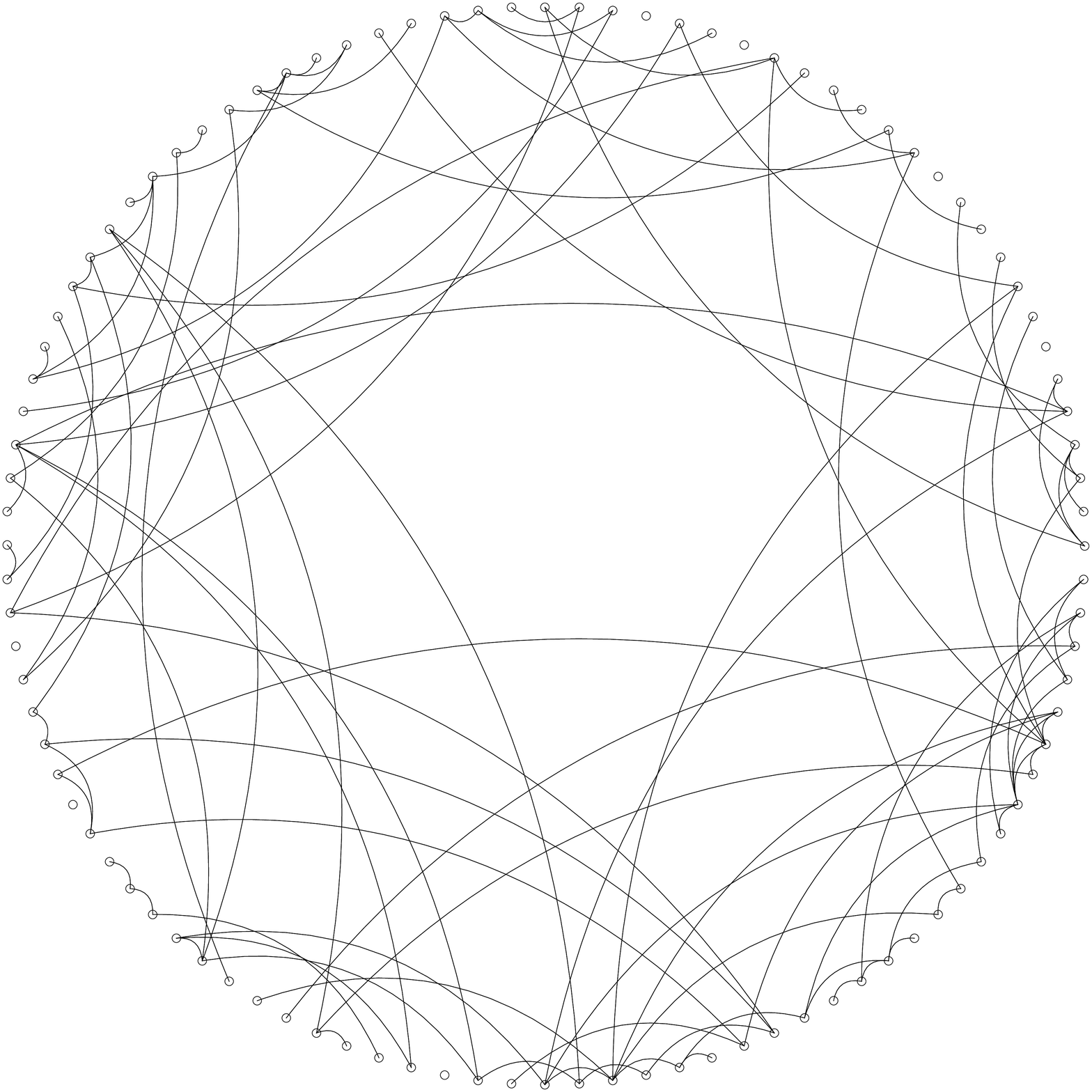}
  \end{center}
  \caption{A realization of the $\alpha$-model $\mathcal{G}_\alpha(n, c)$ with
    $\alpha = 1$, $n = 100$, and $c = 2$.}
  \label{fig:graph}
\end{figure}

\section{Regimes of the $\alpha$-model : The Emergence of Structure}
\label{sec:regimes}
\subsection{$\alpha = 0$ : $\mathcal{G}(n,p)$ Random Graph}
\label{sec:alpha0}
When $\alpha = 0$, the distance between the points does not affect
connectivity, and the $\alpha$-model is exactly the same as
$\mathcal{G}(n,p)$ with $p = c/(n-1)$.  This case has, of course, been
extensively studied, see \cite{erdos:randomgraphs} as well as the book
length discussions in \cite{janson:random} \cite{bollobas:random}.
With regard to connectivity, it is known to undergo a phase transition
at $c = 1$: the largest connected cluster is of size $\theta(\log n)$
in the subcritical phase, $\theta(n^{2/3})$ in the critical phase, and
$\theta(n)$ in the supercritical phase.

\begin{theorem}\emph{(Erd\H{o}s, R\'enyi)}
  Let $G \sim \mathcal{G}(n,p)$ for $p = c/(n-1)$, and $\rho$ be the
  survival probability of a Galton-Watson branching process with
  Poisson$(c)$ offspring distribution. Then
\[
|C_1|/n \convP \rho
\]
 as  $n \rightarrow \infty$.
\label{th:ergraph}
\end{theorem}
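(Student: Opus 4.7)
The plan is to prove the theorem via the classical coupling between the breadth-first exploration of a component in $G$ and a Galton-Watson branching process with Poisson$(c)$ offspring distribution. Fix a vertex $v \in [n]$ and explore $C(v)$ by successively revealing the edges from the current boundary vertex to the still-unexplored set. At each exposure, the number of newly discovered neighbors is Binomial$(m, c/(n-1))$, where $m$ is the unexplored count; as long as the exploration has touched only $o(n)$ vertices, this distribution is close in total variation to Poisson$(c)$. Hence one can couple the first $\epsilon n$ steps of the exploration with the first $\epsilon n$ individuals of the Galton-Watson tree so that they agree with high probability.

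From this coupling, for any fixed $K$ one has $\PR(|C(v)| \geq K) \to \rho_K$, where $\rho_K$ is the probability that the Galton-Watson tree has at least $K$ individuals; moreover $\rho_K \downarrow \rho$ as $K \to \infty$. Let $N_K$ be the number of vertices lying in components of size at least $K$. Summing over $v$ gives $\E[N_K]/n \to \rho_K$, and a companion second-moment calculation -- realized by running the BFS from $v$ first and then from $w$ in the residual vertex set -- yields $\E[N_K^2]/n^2 \to \rho_K^2$. Therefore $\Var(N_K)/n^2 \to 0$ and $N_K/n \convP \rho_K$ by Chebyshev's inequality.

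The main obstacle is promoting this to a statement about the largest component $|C_1|$, since a priori the vertices counted by $N_K$ could be spread across many components of size at least $K$. I would handle this by a sprinkling argument: write $p = 1 - (1-p_1)(1-p_2)$ with $p_1 = (c - \delta)/(n-1)$ and a slow sprinkle $p_2$ tuned so that $K^2 p_2 \gg \log n$, so that the graph is the superposition of two independent Bernoulli layers. The first layer already yields a collection of components of size at least $K$ whose total mass is close to $\rho n$ in probability; in the sprinkled second layer, any two such components are joined by at least one edge with probability at least $1 - (1-p_2)^{K^2}$, and a union bound over pairs shows that with high probability all of them merge into a single giant component. Letting $K \to \infty$ and $\delta \to 0$ gives $|C_1|/n \convP \rho$.

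Finally, when $c \leq 1$ the Galton-Watson tree dies out almost surely so $\rho = 0$, and the sub-exponential tail of the total progeny, transferred through the coupling, shows via a union bound over starting vertices that no component has linear size, thereby completing the theorem in the subcritical and critical regimes.
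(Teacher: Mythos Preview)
The paper does not itself prove Theorem~\ref{th:ergraph}; it is quoted as the classical Erd\H{o}s--R\'enyi result and referenced to the standard sources. The nearest analogue in the paper is the proof of Theorem~\ref{th:main} for $\alpha=1$, which explicitly follows the Erd\H{o}s--R\'enyi template, and your outline---branching-process coupling for the first moment, a two-vertex exploration for the second moment, then sprinkling to merge the large pieces---matches that template closely.

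There is, however, a genuine quantitative gap in your sprinkling step. A union bound over all pairs of large components requires $K^{2}p_{2}\gg\log n$, and since necessarily $p_{2}\le p=O(1/n)$ this forces $K\gtrsim\sqrt{n\log n}$. But your second-moment calculation, which relies on the truncated explorations from $v$ and from $w$ being disjoint with high probability, needs the meeting probability $O(K^{2}/n)$ to be $o(1)$, i.e.\ $K=o(\sqrt{n})$. These two ranges for $K$ do not overlap, so ``fix $K$, do the second moment, then sprinkle with that same $K$'' cannot close. The paper's proof of Theorem~\ref{th:main} bridges exactly this mismatch: it first establishes $|B_{\omega}|/n\convP\rho$ for a very slowly growing $\omega$, then bootstraps to every faster admissible $\omega'$ via the monotonicity observation $\E|B_{\omega}\setminus B_{\omega'}|/n\to 0$, and only then sprinkles at $\omega=\log^{4}n$, using the Erd\H{o}s--R\'enyi \emph{connectivity} threshold on an auxiliary graph of cliques rather than a crude union bound over component pairs. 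Either of those two devices (the bootstrap to larger $\omega$, or replacing the pairwise union bound by an $\mathcal{G}(m,q)$ connectivity argument) would repair your plan; without them the sprinkling step fails at the only values of $K$ your second-moment bound actually covers.
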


Much more is known regarding the distribution sequence of component
sizes, see the above books for details.

\subsection{$0 < \alpha < 1$ : Essentially a Random Graph}
\label{sec:alpha01}

When $\alpha > 0$ the geographic structure of the model starts
affecting the edges. However, as long as $\alpha < 1$ much of the
general behavior is retained.  While the random graph results above
cannot be directly applied, this regime falls within a more recent
general random graph model of Bollob\'as, Janson, and Riordan
\cite{bollobas:phase}.

In the BJR model $\mathcal{G}(n, \kappa)$, one is given a ``ground
space'' $\mathcal{S}$ which we take to be $[0,1]$, and a Borel
measurable kernel $\kappa : S \times S \mapsto \R^+$. For a sequence
of points $(x_1, x_2, \hdots, x_n)$ of $\mathcal{S}$ corresponding the
$n$ vertices, edges are added independently between each pair with
probability:
\[
p_{uv} = \min(\kappa(x_u, x_v) / n, 1).
\]
The model contains a lot of freedoms which we will not require. For
instance the sequence of points may be random, in which case the
limiting distribution obviously matters greatly. We will not need
this, and indeed may set $x_u = u/n$. We then let:
\begin{equation}
\kappa(x, y) =
\begin{cases}
  c / |x - y|^{\alpha} & \text{ for } x \neq y \\
0 & \text{otherwise}
\end{cases}
  \label{eq:bjrkern}  
\end{equation}
where $|x - y|$ is again interpreted as circular distance, this time
on $[0,1]$.

In order for the results of BJR to hold, $\kappa$ must adhere to
certain conditions which the authors call being ``graphical''
(Definition 2.7 in \cite{bollobas:phase}). The troublesome condition
for our $\kappa$ is that it must belong to $L^1(S \times S, \mu \times
\mu)$ (where $\mu$ in our case is the Lebesque measure on
$[0,1]$). Clearly this is true only for $\alpha < 1$.

For this case however, the BJR model with the above kernel gives
$p_{ij} = c / (n^{1 - \alpha} d(i, j)^\alpha)$ which is asymptotically
equivalent to the $\alpha$ model but for a slightly different value of
$c$. BJR prove that $\mathcal{G}(n, \kappa)$ behaves more or less like the
classical $\mathcal{G}(n, p)$ -- in particular, most of the proofs are very
similar barring the technical difficulties incurred by the greater
generality. With regard to connectivity, the same result as above
holds, that the phase transition at which a $\theta(n)$ cluster
emerges is at $c = 1$. 

\begin{theorem}\emph{(Bollob\'as, Janson, Riordan)}
  If $G$ is a random graph of size $n$ from the BJR model with kernel
  $\kappa$, then
\[
|C_1| / n \convP \rho_{\kappa}
\]
where $\rho_\kappa$ is the survival probability of a multitype
Galton-Watson process with the offspring distribution of $x$ given by
a Poisson point process on $\mathcal{S}$ with intensity $\kappa(x, \cdot)$.
\label{th:bjrgraph}
\end{theorem}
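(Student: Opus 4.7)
The plan is to mirror the classical Erd\H{o}s--R\'enyi proof via branching-process comparison, adapted to the inhomogeneous setting. The central observation is that the breadth-first exploration of the component containing a vertex $v$ at position $x_v$ locally resembles a multitype Galton-Watson process: if $x$ is the type of a vertex being explored, the expected number of new neighbors found at positions in a measurable set $A$ is approximately $\int_A \kappa(x,y)\,d\mu(y)$, and the indicator variables are independent. As $n\to\infty$ this offspring law converges to a Poisson point process on $\mathcal{S}$ with intensity $\kappa(x,\cdot)$, which is precisely the process whose survival probability is $\rho_\kappa$. The graphicality hypothesis on $\kappa$ (in particular $\kappa\in L^1$) is what keeps the mean number of offspring finite and lets us control error terms uniformly in $x$.

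I would execute the argument in three steps. First, for the upper bound, I would couple the exploration to the branching process while at most $o(n)$ vertices have been revealed, so that $\PR(|C(v)|\ge K)\le \rho_\kappa(x_v) + o(1)$ for $K=K(n)\to\infty$ slowly. Summing over $v$ and invoking a variance calculation (the events $\{|C(u)|\ge K, |C(v)|\ge K\}$ factorize up to lower order for most pairs $u,v$) gives that the number of vertices in components of size $\ge K$ is $(\rho_\kappa+o(1))n$ in probability from above. Second, for the lower bound, I would dominate the exploration from below by a slightly subcritical branching process, valid as long as the number of revealed vertices is $o(n)$; this yields the matching lower bound and shows $\#\{v:|C(v)|\ge K\}=(\rho_\kappa+o(1))n$.

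The main obstacle, and the step that requires real care, is the third: uniqueness, i.e.\ showing that those $\approx \rho_\kappa n$ vertices all lie in a single component rather than being spread among many of size $\Theta(n)$. Here I would use a sprinkling argument: write each edge probability $p_{uv}=\min(\kappa(x_u,x_v)/n,1)$ as the union of two independent Bernoulli trials with probabilities summing appropriately to $p_{uv}$, say $(1-\delta)p_{uv}$ and a small $\delta' p_{uv}$. The sparser graph already produces $(\rho_\kappa+o(1))n$ vertices in ``large'' components by the first two steps (applied to the kernel $(1-\delta)\kappa$, with continuity of $\rho_\kappa$ in $\kappa$ letting us take $\delta\to0$). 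Then any two such components $A,B$ each of size at least $\eta n$ have, by a straightforward first-moment computation using $\kappa\in L^1$, expected number of sprinkled edges between them bounded below by $\delta'\eta^2\inf\int\kappa\,d\mu^2$ times a constant, which tends to infinity; a matching second-moment estimate gives that they are connected by the sprinkled edges with probability tending to one, and a union bound over the finitely many such components finishes the proof.
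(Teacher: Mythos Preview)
The paper does not prove this theorem at all: it is quoted as a result of Bollob\'as, Janson, and Riordan from \cite{bollobas:phase}, with no argument supplied here. So there is no ``paper's own proof'' to compare your proposal against.

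That said, your sketch is a faithful outline of the strategy in the original BJR paper, and it is also close in spirit to what this paper does prove, namely Theorem~\ref{th:main} for the $\alpha=1$ case. The branching-process sandwich and the second-moment concentration argument you describe match that proof essentially line for line. The one place where the paper's argument for Theorem~\ref{th:main} genuinely diverges from your outline is the uniqueness step: rather than your direct ``any two large components of size $\ge \eta n$ get joined by sprinkled edges'' computation, the paper first shows the branching-process coupling guarantees components of size $\ge \log^4 n$ (not $\Theta(n)$), chops each such component into cliques of size $\log^3 n$, and then argues that the auxiliary graph on these cliques, with edges given by the sprinkled connections, stochastically dominates an Erd\H{o}s--R\'enyi graph above the \emph{connectivity} threshold. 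This buys them uniqueness without ever having to establish that individual components are linear-sized before sprinkling, which matters in the $\alpha=1$ setting because the coupling breaks down much earlier (after $n^{o(1)}$ vertices) than in the BJR regime. Your version of the sprinkling step would need components of order $n$ already in hand, which the lower-bound coupling alone does not deliver in that critical case.
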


Because our geometries are transitive, this reduces to a single-type
Galton-Watson process with Poisson offspring. It is thus well known
that $\rho > 0$ exactly when the expected number of offspring is
greater than 1.

\subsection{$\alpha = 1$ : The Small World}
\label{sec:alpha1}

When $\alpha = 1$, the resulting model no longer falls within the that
of BJR since the $\kappa$ implied by (\ref{eq:bjrkern}) is no longer
integrable at 0. On the other hand, it does not fall within the
long-range percolation models discussed below, this time since $1/x$
is not integrable at infinity.  The connectivity of the resulting
``in-between'' model has to our knowledge not been studied elsewhere.

This model is of added interest due to the results of Kleinberg
regarding small-world models and further work that has followed, see
\cite{kleinberg:smallworld} \cite{kleinberg:networks}. These show that
the value of $\alpha$ is intimately related with possibility of
decentralized routing (path-finding) in graphs, and that it is exactly
when the relation between distance and edge prevalence is as at
$\alpha = 1$ that such routing will find short paths.

In Section \ref{sec:analysis} below, we prove that this case also
undergoes at phase transition equivalent to the previous -- when the
expected degree of each vertex is greater than one, $C_1$ has order
$n$, otherwise it is sublinear.

\subsection{$1 < \alpha < 2$: Long-Range Percolation}
\label{sec:alpha12}

When we move to $\alpha > 1$ the model undergoes another radical
change. In this case the normalizing constant $h_{\alpha, n}
\rightarrow h_{\alpha} < \infty$ as $n \rightarrow \infty$. The model
is thus very similar to the long-range percolation models studied in
the 1980s within mathematical physics (see \cite{newman:percolation}
and \cite{aizenman:discontinuity} for rigorous results). It was found
that these models do undergo a phase transition similar to the regimes
above: for $c$ less than a certain value there is no percolation,
while for large $c$ it can be shown to occur.

While the theorem is stated for percolation on $\mathbb{Z}$ in
\cite{newman:percolation}, the arguments in the proofs use only
finite subsets, and may be stated as

\begin{theorem} (Newman, Schulman) For $1 < \alpha < 2$ there exists
  $c < \infty$ and $\phi > 0$ such that for $\mathcal{G}_\alpha(n, c)$ 
\[
|C_1| \geq \phi\,n\;\;\text{ a.a.s.}
\]
\end{theorem}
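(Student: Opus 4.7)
The plan is to follow the block-renormalization argument of Newman and Schulman, adapted to the finite cycle. Partition the ring $[n]$ into $m = \lfloor n/L\rfloor$ consecutive blocks $B_0,\ldots,B_{m-1}$ of length $L$, where $L$ is a constant to be chosen at the end. Since $1<\alpha<2$, the normaliser $h_{\alpha,n}$ is bounded by $h_\alpha := 2\sum_{k\geq 1}k^{-\alpha}<\infty$, so for $u\in B_i$ and $v\in B_j$ with block-distance $k:=|i-j|\geq 1$ one has $p_{uv}\geq c/(h_\alpha (2kL)^\alpha)$. Using independence of the $L^2$ potential edges between two such blocks, the probability $\pi_k$ that $B_i$ and $B_j$ are joined by at least one open edge satisfies
\[
1-\pi_k \;\leq\; \exp\!\Bigl(-\frac{c}{h_\alpha 2^\alpha}\cdot\frac{L^{2-\alpha}}{k^\alpha}\Bigr).
\]
Because $2-\alpha>0$, the factor $L^{2-\alpha}$ acts as an amplifier: by taking $L$ large we can make $\pi_1$ arbitrarily close to $1$ and the effective constant $c_L := cL^{2-\alpha}/(h_\alpha 2^\alpha)$ as large as desired.

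The second step is to deduce that the induced block-level graph $H$ on the cycle of $m$ supernodes has a component of size $\Omega(m)$. This is the classical Newman-Schulman contour estimate: for any interval $I$ of $K$ consecutive blocks, independence across block-pairs gives
\[
\PR(I\text{ isolated in }H)\;\leq\;\exp\!\Bigl(-c_L\sum_{i\in I,\,j\notin I}\frac{1}{|i-j|^\alpha}\Bigr),
\]
and for $1<\alpha<2$ the boundary sum is of order $K^{2-\alpha}$, so the exponent diverges in $K$. A standard Peierls-type argument summing these bounds over all intervals then shows $H$ has a single cluster of linear size a.a.s. Each block in this cluster harbours a vertex that is an endpoint of a chosen block-edge, and after slightly strengthening the block construction -- for instance by requiring each ``good'' block-edge to be realised by $\Omega(1)$ vertex-edges, or by iterating the renormalisation once to merge vertex-clusters inside blocks -- one obtains $|C_1|\geq \phi n$ for some $\phi>0$.

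The main obstacle is this second step: turning block-edge probabilities that are merely bounded away from $1$ into a genuinely linear-sized cluster. Pure nearest-neighbour percolation on a cycle has no giant component for bond probability below one, so the long block-bonds must actively bridge ``bad'' intervals, and the condition $\alpha<2$ is precisely what makes the boundary sum $\sum 1/|i-j|^\alpha$ diverge with $K$ and close the argument. Transferring the conclusion from the infinite-$\mathbb{Z}$ formulation of Newman and Schulman to our finite ring poses no new difficulty, since every estimate above involves only finitely many edges at a time, as noted in the lead-in to the theorem.
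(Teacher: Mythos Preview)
The paper does not supply its own proof of this statement; it merely attributes the result to Newman and Schulman and remarks that their arguments on $\mathbb{Z}$ involve only finite subsets and hence transfer to the ring. So there is nothing in the paper to compare against beyond that implicit pointer.

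Your sketch has the right starting idea --- blocking into intervals of length $L$ and using the amplifier $L^{2-\alpha}$ is precisely the Newman--Schulman mechanism --- but the way you try to close the argument has a real gap. The Peierls bound you write,
\[
\PR(I\text{ isolated in }H)\;\leq\;\exp\bigl(-c_L\cdot\Theta(K^{2-\alpha})\bigr),
\]
when summed over the $m$ possible starting points of an arc and over all lengths $K$, yields a quantity of order $m\sum_{K\geq 1}\exp(-c_LK^{2-\alpha})=\Theta(m)$, not $o(1)$. With $L$ fixed the block graph $H$ will a.a.s.\ contain isolated blocks (each nearest-neighbour block bond fails with probability bounded away from zero), so a single renormalisation step plus this contour sum cannot establish that $H$ has a linear-sized cluster. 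Note also that after one renormalisation $H$ is again a long-range percolation model on a cycle of size $m$ with the \emph{same} exponent $\alpha$ and a larger constant; invoking a Peierls argument for $H$ is therefore essentially assuming the conclusion at a smaller scale.

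What Newman and Schulman actually do is iterate: they show that if at one scale the density of sites lying in ``large'' clusters is $\rho>0$, then after blocking the corresponding density is some $\rho'>\rho$, and repeating finitely many times drives the density above any threshold below $1$; only once the density is close to $1$ does a direct estimate produce a linear cluster. Your final paragraph gestures at ``iterating the renormalisation once'', but one extra step does not suffice --- it is the iterative improvement of the density, together with a careful definition of ``good block'' that carries internal connectivity, that does the real work, and that is the piece missing from your outline.
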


In all the regimes of the $\alpha$-model discussed in Sections
\ref{sec:alpha0} -- \ref{sec:alpha1}, the critical value of $c$ was
found to be one. This is not possible when $\alpha > 1$.

\begin{proposition}
  For any $\alpha > 1$, there exists an $\epsilon > 0$ such that for
  $c = 1 + \epsilon$ in $\mathcal{G}_\alpha(n, c)$ $|C_1|/n \rightarrow 0$
  a.a.s.
\label{prop:cbig}
\end{proposition}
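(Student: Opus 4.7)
Every vertex has expected degree exactly $c = 1+\epsilon$, but for $\alpha > 1$ this mass is concentrated on short distances, and I plan to exploit this to show the graph is ``effectively subcritical'' for $c$ slightly above $1$. The plan is to split the edges into a nearest-neighbour (NN) part, comprising edges with $d(u,v) = 1$, and a long-range (LR) part comprising the rest, then to argue that the NN subgraph alone already has only finite clusters and that the LR edges are too sparse to glue those clusters into a linearly sized component.

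\textbf{Ingredients.} Set $p = c/h_{\alpha,n}$, the probability of any NN edge. Since $\alpha > 1$, $h_{\alpha,n} \to h_\alpha = 2\zeta(\alpha) > 2$, so $p < 1$ for $c$ close to $1$ and $n$ large. The NN subgraph is then 1-D subcritical bond percolation on the cycle; the NN-cluster of any vertex is an arc of expected length $M = (1+p)/(1-p)$, by a standard geometric-tail computation. The expected LR-degree of any vertex is $c - 2p = c(1 - 2/h_{\alpha,n})$.

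\textbf{Branching bound on cluster sizes.} Explore $C(0)$ by first revealing the NN edges to obtain $C^{NN}(0)$, then revealing the LR edges incident to $C^{NN}(0)$ to obtain ``super-children''; for each such super-child $w$, explore $C^{NN}(w)$ using the still-unrevealed NN edges, and iterate. Since NN and LR edges are mutually independent and every edge is examined at most once, this exploration is stochastically dominated by a size-weighted Galton--Watson tree in which every super-vertex of size $s$ has at most $s(c-2p)$ super-children in expectation and every new super-child independently inherits a fresh NN cluster of expected size $M$. Writing $T_k$ for the total size of generation $k$, iteration gives $\E[T_k] \leq M\lambda^k$ with
\[
\lambda := M\cdot c\,(1 - 2/h_{\alpha,n}) \;=\; \frac{(h_{\alpha,n}+c)\,c\,(h_{\alpha,n}-2)}{h_{\alpha,n}\,(h_{\alpha,n}-c)}.
\]
A direct algebraic calculation at $c=1$ yields $\lambda|_{c=1} = 1 - 2/[h_{\alpha,n}(h_{\alpha,n}-1)]$, strictly less than $1$ for $n$ large since $h_{\alpha,n} \to h_\alpha > 2$. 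By continuity in $c$ there exists $\epsilon > 0$ (depending on $\alpha$) with $\lambda < 1$ at $c = 1+\epsilon$, whence $\E|C(0)| \leq \sum_{k\geq 0} M\lambda^k = M/(1-\lambda) =: K$, uniformly in $n$.

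\textbf{Mean to maximum, and main obstacle.} Using $\sum_{v\in V}|C(v)| = \sum_{\text{cluster }C}|C|^2 \geq |C_1|^2$ together with vertex transitivity gives $\E|C_1|^2 \leq n\,\E|C(0)| \leq Kn$, so $\E|C_1| \leq \sqrt{Kn}$ by Jensen's inequality, and Markov's inequality then yields $|C_1|/n \convP 0$, which is the required a.a.s.\ statement. The main technical obstacle is formalising the stochastic domination in the super-exploration: one must order the edge revelations so that the offspring distribution at each generation is independent of the past, and one must justify treating the NN cluster attached to each new super-child as a fresh unconditional copy (which is legitimate as an upper bound, since the true available NN cluster in the un-revealed graph can only be smaller). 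Once this bookkeeping is in place, everything else is the elementary first-moment estimate above.
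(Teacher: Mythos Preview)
Your proof is correct and takes a genuinely different route from the paper's.

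The paper's argument is a one-line clustering estimate: it bounds the expected number of \emph{second}-order neighbours of a vertex by $c^2 - \E[K(v)]$, where $K(v)$ is the number of triangles through $v$, and observes that for $\alpha>1$ the probability of the single triangle $\{v,v+1,v+2\}$ is already of order $1/h_\alpha^3$, so $c=1+\epsilon$ with $\epsilon\asymp 1/h_\alpha^3$ keeps the two-step mean below $1$ and the exploration is dominated by a subcritical branching process.

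Your approach instead separates the edge set into nearest-neighbour and long-range parts, contracts the subcritical one-dimensional NN percolation clusters into super-vertices, and runs a first-moment branching bound on the resulting super-exploration. The algebra $\lambda|_{c=1}=1-2/[h(h-1)]<1$ is clean, and the final step $|C_1|^2\le\sum_v|C(v)|$ together with Markov is a nice way to pass from a uniform bound on $\E|C(0)|$ to $|C_1|/n\to 0$.

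What each buys: the paper's triangle argument is shorter and morally the same phenomenon (local clustering kills the effective branching ratio), but as written it is somewhat sketchy about the independence needed to turn ``$\E[\text{second-order nbrs}]<1$'' into a genuine subcritical domination. Your decomposition is longer but makes the domination explicit: once you note that any NN edge adjacent to a fresh LR-child $w$ is either unrevealed or already revealed as \emph{closed} (since otherwise $w$ would lie in an earlier super-vertex), the ``fresh NN cluster dominates the conditional one'' step is immediate, and linearity of expectation handles the rest without needing independence of the super-children's sizes. Your method also gives a slightly sharper dependence of $\epsilon$ on $h_\alpha$ (order $1/h_\alpha^2$ rather than $1/h_\alpha^3$).
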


\begin{proof}
  Let $\epsilon = 1 / (6 h_{\alpha}^3)$. For a given graph, let $K(v)$ be the
  clustering number of a vertex $v$. That is
\[
K(v) = \text{number of 3-cycles containing }v
\]
For a given vertex $v$, let $Y$ be the number of second order neighbors.
\[
Y \leq \sum_{u \in N(v)} (|N(v)| - 1) - K(v)
\]
which implies
\[
\E[Y] \leq \E[|N(v)|]\E[|N(u)|] - \E[K(v)].
\]
Since $\E[K(v)] \geq \PR(v \links v+1,v \links v+2, v+1 \links v) = 1 /
(2 h_{\alpha} k^3)$, it holds that $\E[Y] < 1$. It follows that any two-step
exploration process on the graph is dominated by a subcriticial
Galton-Watson process.
\end{proof}

\subsection{$\alpha = 2$ : The Second Critical Value}
\label{sec:alpha2}

The situation when $\alpha = 2$ is also handled in
\cite{newman:percolation} and \cite{aizenman:discontinuity}.  Here it
turns out that for the $\alpha$-model, as we have defined it, there is
no giant cluster unless $c$ is large enough that $p_{u,u+1} = 1$ (in
which case everything is of course trivially connected). However, the
authors show that there are in fact distributions with this tail decay
where you will get a giant cluster, however, it is necessary that:
\[
\liminf_{d \rightarrow \infty} d^\alpha p_{u, u + d} \geq 1
\]
In our case, this implies that $c = h_{\alpha}$, which of course is the trivial
case where $p_{i,i+1} = 1$. For more general long-range percolation
formulations one can define non-trivial situations which do percolate
with this tail.

\subsection{$2 < \alpha < \infty$ : Essentially Percolation}
\label{sec:alpha2infty}

When $\alpha > 2$, the model shows behavior similar to standard
percolation. A way to see this is to consider a renormalization of the
vertex space into large blocks. For $c < h_{\alpha}$ let $B_1, B_2,
\hdots, B_{n/m}$ be contiguous blocks of vertices, each of size $m$.
Let $X_{i,j}$ be the number of edges between blocks $i$ and $j$.
\[
\E[X_{i,i+1}] \leq \sum_{x = -\infty}^0 \sum_{y = 1}^\infty {1 \over h_{\alpha}
  |x - y|^\alpha} \leq \infty.
\]
Since $X_{i,i+1}$ is the sum of independent events each occurring with
probability smaller than one, that it has bounded expectation means
that 
\[
\PR(X_{i, i+1} > 0) \leq p < 1
\]
where $p$ is independent of $n$ and $m$. On the other hand, if $|i -
j| > 1$, then
\[
\PR(X_{i,j} > 0) \leq
\E[X_{i,j}] < h' / m^{\alpha - 2} \rightarrow 0
\]
as $m \rightarrow
\infty$ ($h'$ is a constant).

This means that if $m$ is sufficiently large, and we view two blocks
as connected if there is any edge between them, then the system of
connected blocks will look more or less like standard Bernoulli
percolation.

Formally, let $k = \log n$ and $m = (\log n)^{3 / (\alpha - 2)}$. Then
the integral bounds give that for any vertex $x$
\begin{eqnarray*}
 & & \PR(x\text{ is connected to at least }mk\text{ vertices}) \leq \\
 & & \;\;\;\; \PR(k\text{ adjacent blocks are connected in either
   direction from } x) \\
 & & \;\;\;\; +\; \PR(\text{one of those } 2k \text{ blocks has a non-adjacent
  connection}) \leq 1 / \log n.
\end{eqnarray*}
for $n$ sufficiently large. It follows that $\E[\text{\# vertices in
  clusters larger than } mk] < n / \log n$ whence $\PR(|C_1|/n > \rho)
\rightarrow 0$ for any $\rho > 0$.

%When $\alpha > 2$, the model behaves essentially just like a
%percolation model. The way to see this is to note that if we divide
%the vertices in contiguous blocks $B_1, B_2, \hdots, B_{n/m}$ of size
%$m$, the probability of having a link between $B_i$ and $B_j$ for
%$|i-j| > 1$ is of the order $1 / m^{\alpha - 2}$.  Thus one can get
%arbitrarily close to standard percolation on the blocks - showing that
%in the 1-dimensional case, there cannot be a phase transition, and the
%graph never has a giant cluster for any $c < h_{\alpha}$.

\subsection{$\alpha = \infty$ : Percolation}

As noted above, at $\alpha = \infty$ the $\alpha$-model is exactly
percolation with probability $c/2$ that each edge is open.

\section{Analysis of case $\alpha = 1$}
\label{sec:analysis}

In this section, we prove a result similar to Theorems
\ref{th:ergraph} and \ref{th:bjrgraph} for the ``Small World'' case
where $\alpha = 1$. 

To $\mathcal{G}_1(n,c)$ we associate a Galton-Watson branching process
$\{Z^n_t(c)\}_{t \in \Z^+}$ where the child distribution is the same
as the marginal distribution of each vertices degree in
$\mathcal{G}_{1}(n,c)$. Let $\rho_n = \rho_n(c)$ be the survival
probability of this process, which we know tends to 0 if $c \leq 1$
and a positive value otherwise.

By the ``law of rare events'', the distribution $\delta_i$ converges
to a Poisson$(c)$ distribution. Before proceeding we prove that this
implies that $\rho_n(c) \rightarrow \rho(c)$ as $n \rightarrow
\infty$, where $\rho(c)$ is the survival probability of GW process
with Poisson$(c)$ offspring. This continuity result isn't
new\footnotemark{} and seems to be assumed by some works on random
graphs, but since we need it explicitly at several points below, we
include it here.

\begin{lemma}
  Let for each $k \in \N$, $\{Z^k_i\}_{i = 0}^{\infty}$ be a
  Galton-Watson branching process with offspring given by a
  non-degenerate distribution with probability generating function
  $f_k$ and extinction probability $q_k$ (so that $q_k=f(q_k)$).

  If $f_k \rightarrow f$ as $n \rightarrow \infty$ pointwise, then
  $q_k \rightarrow q$, the smallest value in $[0,1]$ for which $f(q) =
  q$.
\label{lm:gw}
\end{lemma}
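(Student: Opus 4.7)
The plan is to prove $q_k \to q$ via a squeeze, establishing $\limsup_k q_k \leq q$ and $\liminf_k q_k \geq q$ separately. Throughout, I will use the standard facts that each $f_k$ is continuous, non-decreasing, and convex on $[0,1]$ with $f_k(1)=1$, and that these properties pass to the pointwise limit $f$. Two analytic inputs do the real work: first, for a PGF of a non-degenerate distribution with smallest fixed point $q < 1$, strict convexity forces $f(x) < x$ throughout $(q,1)$; and second, that a pointwise convergent sequence of convex functions on $[0,1]$ converges uniformly on every compact subinterval of $[0,1)$.

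For the upper bound, the case $q=1$ is immediate since $q_k \in [0,1]$. If $q<1$, fix $\epsilon \in (0,1-q)$. Then $f(q+\epsilon) < q+\epsilon$, so pointwise convergence gives $f_k(q+\epsilon) < q+\epsilon$ for all $k$ large enough. Since $f_k$ is continuous with $f_k(0) \geq 0$ and $f_k(q+\epsilon) < q+\epsilon$, the intermediate value theorem applied to $f_k(x)-x$ produces a fixed point of $f_k$ in $[0,q+\epsilon]$, so $q_k \leq q+\epsilon$ eventually. Letting $\epsilon \to 0$ gives $\limsup_k q_k \leq q$.

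For the lower bound, I argue by contradiction. Suppose some subsequence $q_{k_j} \to q^* < q \leq 1$. Choose $\beta \in (q^*,1)$; eventually all $q_{k_j}$ lie in the compact interval $[0,\beta] \subset [0,1)$. On this interval the convexity upgrade promotes $f_{k_j} \to f$ from pointwise to uniform convergence, and combined with continuity of $f$ this yields $f_{k_j}(q_{k_j}) \to f(q^*)$. But $f_{k_j}(q_{k_j}) = q_{k_j} \to q^*$, so $f(q^*)=q^*$, contradicting the minimality of $q$.

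The main obstacle is the lower bound: passing the limit through $f_{k_j}(q_{k_j})$ is not automatic from pointwise convergence alone, and the convexity-based uniform-on-compacta upgrade is the essential ingredient ruling out pathological oscillations. A secondary subtlety is that if $f$ itself is degenerate (for instance $f(x)=x$), the smallest fixed point can collapse in a discontinuous way and an extra hypothesis is needed; however, in the applications below $f$ is the PGF of a non-trivial Poisson offspring law, which is strictly convex, so this degenerate scenario does not arise.
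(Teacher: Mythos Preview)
Your argument is correct and overlaps with the paper's in the lower-bound half: both show that any subsequential limit $q^*$ of $(q_k)$ must satisfy $f(q^*)=q^*$, hence $q^*\geq q$. You are more careful than the paper here, making explicit the uniform-on-compacta convergence (from convexity) needed to pass the limit through $f_{k_j}(q_{k_j})$; the paper simply writes $f_{k_i}(q_{k_i})\to f(\bar q)$ without comment. The genuine difference is in the upper bound. The paper argues indirectly: if some $q_{k_i}\to 1$, then for every $s\in(q,1)$ eventually $s<q_{k_i}$, whence $f_{k_i}(s)>s$ (since $q_{k_i}$ is the \emph{smallest} fixed point of $f_{k_i}$), and in the limit $f(s)\geq s$ on $(q,1)$, which together with convexity and $f(q)=q$, $f(1)=1$ forces $f\equiv\mathrm{id}$ on $[q,1]$ and yields a contradiction. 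Your route is more direct: from $f(q+\epsilon)<q+\epsilon$ you get $f_k(q+\epsilon)<q+\epsilon$ eventually and trap a fixed point of $f_k$ below $q+\epsilon$ by the intermediate value theorem. Your approach buys a cleaner, constructive bound that does not require classifying all fixed points of $f$; the paper's approach makes the role of convexity in the global fixed-point structure more visible. Both arguments share the same blind spot---the degenerate limit $f(s)=s$---which you explicitly flag and the paper does not; as you note, this does not arise in the Poisson application.
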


\begin{proof}
  Recall that the probability generating functions involved are
  convex, and take the value 1 at 1. Choose a subsequence $k_i$ such
  that $q_{k_i} \rightarrow \bar{q}$.
\[
\bar{q} \leftarrow q_{k_i} = f_{k_i}(q_{k_i}) \rightarrow f(\bar{q})
\]
all as $n \rightarrow \infty$. It follows that $\bar{q}$ is a fixed
point of $f$. If $q = 1$ then that is $f$'s only fixpoint in $[0,1]$,
and it follows directly that $\bar{q} = q$ and the result is
established. If $q < 1$, $f$ now has two fixpoints in $[0,1]$ by
convexity: $1$ and $q$. We must rule out the case $\bar{q} = 1$.

Assume that $q_{k_i} \rightarrow 1$. Let $q < s < q_{k_i}$, which
implies that $f_{k_i}(s) > s$. Letting $i \rightarrow \infty$ for all
$q < s < 1$, $f(s) \geq s$. But then there cannot exists a fixpoint $q
< 1$ such that $f(q) = q$, which contradicts out assumption.
\end{proof}

\footnotetext{Thanks to Peter Jagers for helping me with this proof.}

\begin{theorem}
  If $G \sim \mathcal{G}_{1}(n,c)$ then
\[
{|C_1(G)| \over n} \convP \rho \;\text{ as } n \rightarrow \infty
\]
\label{th:main}
\end{theorem}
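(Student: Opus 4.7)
My plan is to follow the standard scheme for giant-component results: a branching-process upper bound on $|C_1|/n$ from an exploration argument, and a matching sprinkling lower bound. The analytic inputs we need are in place. Because $p_{uv} = c/(h_{1,n} d(u,v))$ with $h_{1,n} \sim 2\log n$, each individual $p_{uv} \to 0$ while $\sum_{v \neq u} p_{uv} = c$, so by the law of rare events the marginal degree of any vertex converges to $\mathrm{Poisson}(c)$; Lemma~\ref{lm:gw} then upgrades this to $\rho_n \to \rho$. By rotational symmetry of the ring metric all vertices have the same component-size law, so it suffices to control $|C(0)|$.

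\emph{Upper bound.} I would explore $C(0)$ by BFS, revealing the edge indicators only as their endpoints become active. As long as the explored set $A$ satisfies $|A| = o(n/\log n)$, the new neighbors of an active vertex form a sum of Bernoullis with mean $c - O(|A|/n)$, and the whole exploration is stochastically dominated by $Z^n_t(c)$. For any fixed $K$, this gives $\PR(|C(0)| \geq K) \to \rho_\infty(K)$, the probability that a $\mathrm{Poisson}(c)$-offspring tree has total size $\geq K$, with $\rho_\infty(K) \downarrow \rho$. Setting $N_K = \#\{v : |C(v)| \geq K\}$, linearity yields $\E[N_K]/n \to \rho_\infty(K)$, and a standard second-moment argument on $N_K$ (two BFS explorations from $u$ and $v$ decouple while their active sets do not collide, an event of probability $1-o(1)$ up to sizes $o(\sqrt{n/\log n})$) gives $N_K/n \convP \rho_\infty(K)$. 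Since $|C_1| \leq N_K + K$, sending $K \to \infty$ yields $|C_1|/n \leq \rho + o_p(1)$.

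\emph{Lower bound via sprinkling.} Fix a small $\epsilon > 0$ and split $c = c_1 + c_2$ with $c_2 = \epsilon$. The inequality $p_1 + p_2 - p_1 p_2 \leq p$ (valid whenever $c_1 + c_2 \leq c$) lets me realize $G \sim \mathcal{G}_1(n,c)$ as the union of two independent graphs $G_1 \sim \mathcal{G}_1(n, c_1')$ and $G_2 \sim \mathcal{G}_1(n, c_2')$ with $c_1' + c_2' = c - o(1)$. Applying the upper-bound machinery to $G_1$ at parameter $c_1'$ exhibits, for each fixed $K$, a set $S$ of $(\rho^{c_1'}_\infty(K) + o_p(1))n$ vertices lying in $G_1$-components of size $\geq K$. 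By continuity of the survival probability in $c$ (via Lemma~\ref{lm:gw}), $\rho^{c_1'}_\infty(K) \to \rho$ as $\epsilon \to 0$ and $K \to \infty$, so it suffices to show that the independent edges of $G_2$ a.a.s.\ merge every pair of these large $G_1$-components into a single cluster.

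\emph{Main obstacle.} The merging step is where the work is, because the kernel $1/d$ is not integrable at the diagonal and a large $G_1$-component could a priori concentrate on a short arc where $G_2$-edges to the rest of the ring are scarce. The remedy is to exploit the long $\mathcal{G}_1$-edges: a single edge from a vertex has length density $\propto 1/d$ on $[1, n/2]$, which is uniform on logarithmic scales, so a BFS cluster of size $K$ must a.a.s.\ intersect every dyadic scale $[2^j, 2^{j+1}]$ once $K$ is large. In particular, any two large $G_1$-components between them cover a positive fraction of vertex pairs at every scale, so the expected number of $G_2$-edges between them is $\Omega(K^2/\log n) \to \infty$; a union bound over the at most $O(n^2/K^2)$ pairs of large clusters shows that a.a.s.\ they all lie in a single $G_2$-connected component. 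Combined with the upper bound and $\rho^{c_1'}_\infty(K) \to \rho$, this yields $|C_1|/n \convP \rho$.
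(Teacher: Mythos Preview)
Your upper bound and the overall sprinkling architecture are fine and match the paper's scheme. The gap is in the merging step.

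With $K$ fixed, the expected number of $G_2$-edges between two clusters $A,B$ of size $K$ is
\[
\frac{c_2}{h_{1,n}}\sum_{a\in A,\,b\in B}\frac{1}{d(a,b)}\;\le\;\frac{c_2}{h_{1,n}}\cdot K\,h_{1,n}\;=\;c_2 K,
\]
and for two clusters lying on opposite arcs of the ring (all inter-distances $\Theta(n)$) it is only $\Theta(K^2/(n\log n))\to 0$. Your dyadic-scale claim does not prevent the latter: even if each cluster, viewed from its own seed, hits every scale, two such clusters seeded at $0$ and at $n/2$ still have almost all pairwise distances of order $n$. So the asserted $\Omega(K^2/\log n)\to\infty$ is not correct (it is missing the factor $n$ in the denominator and cannot diverge for fixed $K$), and the union bound over $O(n^2/K^2)$ pairs blows up rather than closes.

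The paper's remedy is to abandon fixed $K$ and grow to polylog size before sprinkling: take $\omega(n)=\log^4 n$, extract from each large component disjoint cliques of size $\log^3 n$ (working in the cluster-completed graph), and use only the crude uniform bound $p_{uv}\ge \delta/(h_{1,n}\,d(u,v))\ge \delta/(n\log n)$. Between any two cliques this already gives a $\mathrm{Bin}(\log^6 n,\;\delta/(2n\log n))$ lower bound on the number of sprinkled edges, so the auxiliary graph on the $\Theta(n/\log^3 n)$ cliques stochastically dominates an Erd\H{o}s--R\'enyi graph with edge probability of order $(\log^5 n)/n$, well above the $\log m/m$ connectivity threshold; hence all large components merge a.a.s. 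The price of growing $\omega$ is that the direct two-vertex second-moment argument for $|B_\omega|/n\convP\rho$ only works for very small $\omega$ (the paper uses $\omega\le\log\log n$); one then transfers concentration to $\omega=\log^4 n$ via the monotonicity trick $\E|B_\omega\setminus B_{\omega'}|/n\to 0$ for nested admissible $\omega\le\omega'$. Your proposal skips both of these ingredients, and without them the lower bound does not go through.
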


The proof largely follows the proof for Erd\H{o}s-R\'enyi type graphs (see
$\cite{janson:random}$, and also $\cite{bollobas:phase}$.  We follow
the proof of Lemma 9.6 in the latter without significant deviation up
to the proof of Claim 2 in the latter half).  The difference is that,
because of the clustering, the branching process coupling breaks down
sooner. Therefore we need a different argument for why all ``large''
clusters are in fact the same.

\begin{proof}
  Choose $c'$ and $\epsilon$ such that $1 < c' < c$ and $0 <
  \epsilon < 1 - 1/c'$. 

We construct the graph using the following well known coupling: For
every pair of vertices $u$ and $v$, let $U_{u,v}$ be a random variable
uniformly distributed on $[0,1]$. We add an edge between $u$ and $v$
if $U_{u,v} < p_{u,v}$.

Let $G \sim \mathcal{G}_1(n,c')$ constructed in this manner. Later we will
increase the $p_{u,v}$ by
\begin{equation}
\delta \over h_{1,n} d(u,v)
\label{eq:exed}
\end{equation}
where $\delta = c - c'$. The resulting graph is distributed the same
as $\mathcal{G}_1(n,c)$.

Note that 
\begin{equation}
\log(n) \leq h_{1, n} \leq 2 \log(n)
  \label{eq:hbounds}
\end{equation}

Consider a standard exploration process on $G$ starting at a vertex
$x$. We terminate the exploration either when the explored set becomes
larger than some function $\omega(n) \leq n^{\epsilon}$ where
$\omega(n) \rightarrow \infty$ as $n \rightarrow \infty$ or when the
exploration process dies. Following \cite{bollobas:phase} (but with a
modified definition) we call such functions \emph{admissible} if for
any $\gamma > 0$
\[
\omega(n) / n^{\gamma} \rightarrow 0 
\]
as $n \rightarrow \infty$. Let the set $B = B_{\omega}$ be set of $x$
for which the process stopped for the former reason ($B$ thus contains
all the vertices in components larger than $\omega(n)$).

Since $\omega(n) \ll n^{\epsilon}$, the exploration may be coupled
between two branching processes. From above, we can couple it with
$Z^n(c')$, and from below by a similar process but where the offspring
are given a random variable $Y$, the degree of $x$ only counting
neighbors more than $\omega(n)$ steps away (since the worst case is
that we have already explored the $\omega(n)$ nearest vertices). Using
(\ref{eq:hbounds}) this gives
\[
\E[Y] \geq c' \left (1 - {\log \omega(n) \over \log n} \right ) \geq c' (1-\epsilon).
\]
Let $\rho' = \rho'(c')$ be the survival probability of this process. It then
follows that, 
\[
\rho' \leq \PR(x \in B) \leq \rho(c') + o(1).
\]
By selecting $\epsilon$ sufficiently small, we can make $\rho'$
arbitrarily close to $\rho(c')$ (Lemma \ref{lm:gw}), while the
coupling still holds for $n$ sufficiently large. Thus $\PR(x \in B)
\rightarrow \rho(c')$ as $n \rightarrow \infty$. Addition over all the
vertices gives
\begin{equation}
{1 \over n} \E|B| \rightarrow \rho(c').
  \label{eq:expconv}
\end{equation}

What remains is to show two things:
\begin{enumerate}
\item For all admissible $\omega(n)$, $|B| / n \convP \rho(c')$ as $n \rightarrow \infty$.
\item For some admissible $\omega(n)$, $B$ consists of only one component.
\end{enumerate}

To prove the first claim, we note that the derivation of
(\ref{eq:expconv}) did not depend on the choice of $\omega(n)$, and
thus holds for all admissible functions. Given such a function,
let $\omega'(n)$ be one strictly larger, and let $B$ and $B'$ be their
respective sets of vertices in large components (note that $B' \subset
B$). 
\begin{eqnarray}
{\E|B \backslash B'| \over n} = {\E|B| - \E|B'| \over n} \rightarrow 0.
\label{eq:ebb}
\end{eqnarray}
It follows that if $|B| / n \convP \rho(c')$ holds for $B$, it must also hold
for $B'$, since
\begin{eqnarray*}
\PR\left ( \left | {|B'|\over n } - \rho \right | > \epsilon_1
\right ) & = & \PR\left ( \left | {|B| - |B \backslash B'| \over n } - \rho
  \right | > \epsilon_1 \right ) \\
& \leq & \PR\left ( \left | {|B|\over n } - \rho \right | > \epsilon_1/2
\right ) \\
& & \; + \;\PR\left ( {|B \backslash B'|\over n } > \epsilon_1/2
\right) \rightarrow 0
\end{eqnarray*}
The second term of the convergence follows by (\ref{eq:ebb}) and the
first moment method.

Now let $\omega(n) \leq \log \log(n)$. We will show the claim for this
$\omega(n)$, and use the previous result to establish it for any
faster growing $\omega(n)$. We now explore from two vertices $x$ and
$y$.  Start the exploration from $x$ first. At the end of this, we
have found a connected subset $C(x)$ of vertices around $x$. Because
both the expected degree of each vertex and the variance is constant,
a Chebyshev bound shows that the probability that we should encounter
a vertex with more than $\omega(n)$ neighbors is $o(1)$. Thus 
we can assume that $|C(x)| \leq 2 \omega(n)$.
\[
\PR(y \in C(x)) \leq {2 \omega(n) \over \log n} = o(1)
\]
since at each step of the exploration, the probability that any vertex
which is not $y$ is connected to it is less than $1 / 2 \log n$ by
(\ref{eq:hbounds}). Next we explore from $y$ and until we have
constructed a $C(y)$. In each step of the exploration, the probability
we draw a vertex in $C(x)$ next is bounded from above by $\log \log
\log n / \log n$, so
\[
\PR(C(x) \cap C(y) \ne \emptyset) \leq 2 \log \log n {\log \log \log
  n\over \log n} + o(1) = o(1).
\]
It follows that:
\[
\rho' \rho' - o(1) \leq P(x, y \in B) \leq \rho(c') \rho(c') + o(1)
\]
whence $P(x,y \in B) = \rho^2(c')$ and 
\[
{1 \over n^2} \E[\,|B|^2] \rightarrow \rho^2
\]
as $n \rightarrow \infty$. This means that $\Var(|B| / n) \rightarrow
0$ and thus $|B| / n \convP \rho(c')$. The first claim is thus
established.

For the second claim, we will add the additional edges that we
withheld in the beginning by increasing the threshold for edge
existence by (\ref{eq:exed}), and show that this connects all large
clusters. We start by letting $\omega(n) = \log^4(n)$ and
$B$ be as before. We condition on the graph $G$ constructed with the
$c'$ threshold, which we may assume has $|B| \geq (\rho - \epsilon_2) n$
(for $\epsilon_2$ arbitrarily small) by the above. From $G$ we create
the graph $G^c$ by completing all the connected clusters of $G$ - note
that while this adds edges, it does not change the connectivity
properties of the graph. In particular, if adding the additional edges
makes $B$ a connected component in $G^c$, it does so also in $G$.

%Now let $0 < \epsilon_3 < \min(\epsilon, 1/2)$, and 

Now select from $B$ as many subsets $K_1, K_2, \hdots, K_m$ as
possible, such that each $K_i$ is a clique in $G^c$, and each $|K_i| =
\log^3(n)$. Since $B$ consists only of connected clusters of size at
least $\log^4(n)$ in $G^c$, we can select these so that $m = (\rho -
\epsilon_2 - o(1)) n / \log^3(n)$.

Consider now the graph $H$, created by taking the $K_i$ as vertices,
as connecting $K_i$ and $K_j$ if a new edge is created between any two
constituent vertices when the $\delta n$ edges are added. Since for
any vertices $x$ and $y$, $d(x,y) \leq n/2$
\[
\PR(\mbox{$x$ and $y$ are connected by the new edges}) \geq {\delta
  \over 2 n \log n}
\]
It follows that the number of connections created between $K_i$ and
$K_j$ dominates a random variable $X$ which is $\text{Bin}(\log^6(n),
\delta / 2 n \log n)$ distributed. From a simple second moment
estimate, one gets
%, using that $\epsilon_3 \leq 1/2$, one gets
\[
\PR(X > 0) \geq {\delta \log^6(n) \over 4 n \log n} = \left
  (\delta \log^2(n) \over 4 \right ) {\log^3(n) \over n}.
\]
It follows that $H$ is dominated by a graph of the form
\[
\mathcal{G} \left ((1 - \epsilon_2 - o(1)) {n \over \log^3(n)}, \left
  (\delta \log^2(n) \over 4 \right ) {\log^3(n) \over n} \right )
%{\delta
%  n^{\epsilon_3} \over 4 \log n } {1 \over n^{1 - \epsilon_3}}\right )
\]
of the standard Erd\H{o}s-R\'enyi $\mathcal{G}(n,p)$ family. But the
threshold for $\mathcal{G}(n,p)$ being completed connected a.a.s.\ is
$p \gg \log n / n$, which holds here. Thus $H$ is a.a.s.\ connected,
from which it follows that $B$ is a.a.s.\ connected in the completed
graph. This establishes the result.
\end{proof}

\section{Generalisation and a Conjecture} 

The result in Section \ref{sec:analysis} shows that in the
$\alpha$-model, $\alpha \leq 1$ gives a connectivity phase transition
similar to classical random graphs, while $\alpha > 1$ does not. It is
of interest to understand more precisely when this transition occurs.
That is, for more general $p_{u,v}$ when does having a mean vertex
degree greater than one sufficient for a giant component to emerge?
Put more philosophically: when is a random graph a random graph, and
when is it percolation?

A generalization of the $\alpha$-model in Definition
\ref{def:alphamodel} is given by letting
\begin{equation}
p_{u,v} = \PR(\{u,v\} \in E) = {c f(d(u,v)) \over h_{f, n}}
\label{eq:fmodel}
\end{equation}
where $f : \N \rightarrow \R$ is some decreasing function, and 
\begin{equation}
h_{f,n} = \sum_{u \in V : u \neq 0} f(d(0,u)).
\label{eq:hfn}
\end{equation}
$\mathcal{G}^\alpha(n,c)$ is thus given by letting $f(x) =
1/x^\alpha$. The question then becomes for which $f$ functions $c = 1$
is the critical value above which a giant component emerges.

Proposition \ref{prop:cbig} directly provides a necessary requirement,
namely that $h_{f,n} \rightarrow \infty$ as $n \rightarrow \infty$.
Thus for instance $f(x) = 1 / (x \log^{1 + \epsilon} x)$ for $\epsilon
> 0$ cannot behave in the ``random graph manner''. On the other hand,
by retracing the steps of the Proof of Theorem \ref{th:main}, for the
case $f(x) = 1 / (x \log x)$, where $h_{f,n} \approx \log\log n$, it
is easy to see that the critical $c$ is exactly 1 also in this case.

In fact, $f$ may be such that $h_{f,n}$ is any logarithm iteration,
and the methods of the proof still go through with little
modification. The only problematic requirement is that initial
coupling with a branching process should break down before we have
seen $\log^3 n$ vertices, which is the same as requiring that
\[
{ \sum_{x = 1}^{\log^3 n} f(x) \over \sum_{x=1}^{n} f(x) } \rightarrow 0
\]
as $n \rightarrow \infty$. This is of course true for all but the very
most slowly growing sums, but it does not hold for instance for $f$
such that $h_{f,n} = \log^* n$.

In light of this, not withstanding that proving it would require a
different method, we feel motivated to make the following conjecture
\begin{conjecture}
  In the random graph created by using edge probabilites given by
  equations (\ref{eq:fmodel}) and (\ref{eq:hfn}), $c = 1$ is the
  critical value for the emergence of a giant component if, and only
  if, $h_{f,n} \rightarrow \infty$ as $n \rightarrow \infty$.
\end{conjecture}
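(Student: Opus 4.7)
The converse direction of the conjecture, that $h_{f,n} \not\to \infty$ forces the critical $c$ above one, is an extension of Proposition~\ref{prop:cbig}. If $h_{f,n}$ stays bounded then for any triangle $\{v, v+1, v+2\}$ the probability that all three edges are present is at least $c^3 f(1)^2 f(2) / h_{f,n}^3$, so the expected clustering at a vertex is bounded below uniformly in $n$. Choosing $\epsilon$ sufficiently small in terms of that lower bound makes the expected number of second-order neighbours at $c = 1 + \epsilon$ strictly less than one, and the two-step exploration argument used in Proposition~\ref{prop:cbig} completes this direction.

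For the forward direction, when $h_{f,n} \to \infty$ and $c > 1$, I would follow the proof of Theorem~\ref{th:main} step by step. The subcritical side $c < 1$ uses only that each vertex has marginal degree $c$, so the branching-process upper bound gives $|C_1| = O(\log n)$ regardless of $f$. On the supercritical side, fix $c' \in (1, c)$, reserve a sprinkling of density $\delta = c - c'$, and take an admissible cutoff $\omega(n)$ chosen as the largest function with $\sum_{x=1}^{\omega(n)} f(x) = o(h_{f,n})$. The Galton--Watson coupling survives up to $\omega(n)$ explored vertices, whence Lemma~\ref{lm:gw} together with a two-vertex second-moment estimate (any non-explored vertex is hit at each step with probability at most $c/h_{f,n} = o(1)$) yields $|B|/n \convP \rho(c')$.

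The sprinkling stage is where the plan meets its obstacle. The argument of Theorem~\ref{th:main} carves large cliques out of $B$ and connects them through an auxiliary Erd\H{o}s--R\'enyi graph, which works as long as the cross-edge probability $\delta f(n/2)/h_{f,n}$ times the square of the clique size exceeds the $(\log n)/n$ connectivity threshold. This reduces to a polylog inequality that remains valid as long as $h_{f,n}$ is at least some iterated logarithm, but for kernels with $h_{f,n}$ growing more slowly (e.g.\ $h_{f,n} = \log^{\ast} n$) the mass of $f$ is forced so close to the origin that $f(n/2)$ becomes negligible, while $\omega(n)$ is simultaneously forced below any power of logarithm by the coupling constraint; the single-round sprinkling then cannot close the gap.

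Overcoming this is the main obstacle, and I would attempt a multi-scale renormalisation in the spirit of \cite{aizenman:discontinuity}: split $\delta$ into a summable sequence $\delta_1, \delta_2, \ldots$ and, at the $k$-th round, use only sprinkled edges of typical length $L_k$ for a geometrically increasing sequence $L_k$, so that each round merges the clusters produced at round $k-1$. The accounting needed to show that every round absorbs a positive fraction of the surviving mass, so that the final cluster has size $\rho(c) n + o(n)$, is the technical heart of the conjecture and appears to require tools beyond the branching-process methods used in the $\alpha = 1$ case.
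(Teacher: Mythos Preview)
The statement you are attempting is a \emph{conjecture} in the paper; there is no proof to compare against. The paper's surrounding discussion (Section~5) says essentially what your first three paragraphs say: the converse direction is an immediate extension of Proposition~\ref{prop:cbig}, and for the forward direction the proof of Theorem~\ref{th:main} goes through verbatim provided the coupling constraint
\[
\frac{\sum_{x=1}^{\omega(n)} f(x)}{\sum_{x=1}^{n} f(x)} \longrightarrow 0
\]
can be satisfied with $\omega(n)$ at least a power of $\log n$, so that the clique-sprinkling step has enough room. The paper explicitly notes this fails for $h_{f,n}$ of order $\log^{\ast} n$, which is precisely the obstacle you isolate in your third paragraph. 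So on the diagnostic level you and the paper are in complete agreement.

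Where you go beyond the paper is the fourth paragraph: the multi-scale sprinkling with a summable sequence $(\delta_k)$ and geometrically growing length scales $L_k$. This is a plausible line of attack, but you do not carry it out; you yourself describe the required accounting as ``the technical heart of the conjecture'' needing ``tools beyond'' the present methods. That is an honest assessment, but it means what you have written is a proof \emph{strategy}, not a proof. The gap is exactly the one the paper declines to close, and your proposal does not close it either --- it names a candidate mechanism without verifying that each renormalisation round actually absorbs a positive fraction of mass when $h_{f,n}$ grows arbitrarily slowly. Until that inductive step is made rigorous (and it is not clear that it can be, since for such $f$ almost all the edge-mass lives at distance $o(n^{\gamma})$ for every $\gamma>0$, so long sprinkled edges are genuinely scarce), the forward direction remains open.

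One minor point: there is no ``largest'' admissible $\omega(n)$ satisfying your coupling condition; you simply need the existence of some such $\omega$ growing to infinity, which is equivalent to $h_{f,n}\to\infty$.
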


\section{Conclusion}

The $\alpha$-model spans the spectrum from structure-free random graph
models to ordinary percolation. When $\alpha \leq 1$, the connectivity
results more or less mirror those of random graphs, whereas for
greater values they behave more like percolation. 

We note that the cases where $\alpha \leq 1$ are exactly those where
$p_{u,v} \rightarrow 0$ as $n \rightarrow \infty$ for all $u \neq v$.
An interesting question is to further explore this territory and see
if this property, under some regularity (perhaps monotonicity)
requirements, is sufficient for ``random graph'' type behavior, or if
there are cases where this holds, but where the critical value is not
one.

%%% Local Variables: 
%%% mode: latex
%%% TeX-master: "rgraphs"
%%% End: 

\section*{Acknowledgments}
Thanks to my advisers Olle Häggström, and Devdatt Dubhashi, as well as
Oliver Riordan for some early discussion about the subject, and Peter
Jagers for some technical assistance. Also thanks to Svante Janson who
pointed out an error in an earlier version of this manuscript.

\bibliographystyle{unsrt}
\bibliography{../../tex/ossa}

\end{document}